\title[On the Skorohod topology for a completely regular space]
{On the Skorohod topology for functions with values in a 
 completely regular space} 
\date{12 November, 2025}
\author{Svante Janson}
\thanks{Supported by the Knut and Alice Wallenberg Foundation
and
the Swedish Research Council
}
\address{Department of Mathematics, Uppsala University, PO Box 480,
SE-751~06 Uppsala, Sweden}
\email{svante.janson@math.uu.se}
\newcommand\urladdrx[1]{{\urladdr{\def~{{\tiny$\sim$}}#1}}}
\subjclass[2020]{60B05; 60G05}
\numberwithin{equation}{section}
\renewcommand\le{\leqslant}
\theoremstyle{plain}
\newtheorem{theorem}{Theorem}[section]
\newtheorem{lemma}[theorem]{Lemma}
\newtheorem{corollary}[theorem]{Corollary}
\theoremstyle{definition}
\newcommand\xqed[1]{%
    \leavevmode\unskip\penalty9999 \hbox{}\nobreak\hfill
    \quad\hbox{#1}}
\newtheorem{exampleqqq}[theorem]{Example}
\newenvironment{example}{\begin{exampleqqq}}
  {\xqed{$\triangle$}\end{exampleqqq}}
\newtheorem{remarkqqq}[theorem]{Remark}
\newenvironment{remark}{\begin{remarkqqq}}
  {\xqed{$\triangle$}\end{remarkqqq}}
\newtheorem*{ack}{Acknowledgement}
\theoremstyle{remark}
\newcounter{dummy}
\newcommand\myitem[1][]{\item[#1]\refstepcounter{dummy}\def\@currentlabel{#1}}
\newenvironment{romenumerate}[1][-10pt]{
\addtolength{\leftmargini}{#1}\begin{enumerate}
 }{\end{enumerate}}
\newcounter{oldenumi}
{\setcounter{oldenumi}{\value{enumi}}
\begin{romenumerate} \setcounter{enumi}{\value{oldenumi}}}
{\end{romenumerate}}
\newcounter{thmenumerate}
\newcounter{xenumerate}   
\newcommand{\refT}[1]{Theorem~\ref{#1}}
\newcommand{\refTs}[1]{Theorems~\ref{#1}}
\newcommand{\refC}[1]{Corollary~\ref{#1}}
\newcommand{\refL}[1]{Lemma~\ref{#1}}
\newcommand{\refS}[1]{Section~\ref{#1}}
\xdef\klockan{\the\count1.0\the\count255}
\xdef\klockan{\the\count1.\the\count255}\fi
\newcommand\set[1]{\ensuremath{\{#1\}}}
\newcommand\xpar[1]{(#1)}
\newcommand\bigpar[1]{\bigl(#1\bigr)}
\newcommand\Bigcpar[1]{\Bigl\{#1\Bigr\}}
\def\rompar(#1){\textup(#1\textup)}    
\def\xexp(#1){e^{#1}}
\newcommand\ntoo{\ensuremath{{n\to\infty}}}
\newcommand\downto{\searrow}
\newcommand\upto{\nearrow}
\newcommand\punkt{\xperiod}    
\newcommand\eg{e.g\punkt}
\newcommand\bbR{\mathbb R}
\newcommand\bbC{\mathbb C}
\newcommand\bbN{\mathbb N}
\newcounter{CC}
\newcounter{cc}
\newcommand\gd{\delta}
\newcommand\gl{\lambda}
\newcommand\gL{\Lambda}
\newcommand\gs{\sigma}
\newcommand\eps{\varepsilon}
\renewcommand\phi{\xxx}  
\newcommand\oi{\ensuremath{[0,1]}}
\newcommand\ooi{(0,1]}
\newcommand\oio{\ensuremath{[0,1)}}
\newcommand\ooo{[0,\infty)}
\newcommand\hI{\widehat I}
\newcommand\td{\tilde d}
\newcommand\tzeta{\widetilde\zeta}
\newcommand\xzeta{\zeta^*}
\newcommand\txzeta{\widetilde\zeta^*}
\newcommand\cadlag{c\`adl\`ag}
\begin{document}

\begin{abstract} 
We correct a gap in the proof of a basic theorem by Jakubowski (1986)
on the Skorohod topology on the space of functions on $[0,1]$ with values in
a completely regular topological space.
\end{abstract}

\maketitle

\section{Introduction}\label{S:intro}

The Skorohod ($J_1$) topology 
on the space
$D(\oi)$ of functions on $\oi$ that are 
\cadlag{}
(i.e., right-continuous on $\oi$ and with left limits at all $t\in\ooi$)
is of fundamental importance in the study of stochastic processes.
The topology was introduced by \citet{Skorohod} for real-valued
functions; this is perhaps still the most important case, but 
it has also been very useful to extend the definition to functions on $\oi$
with values in other spaces. In particular, the extension to functions with
values in a  metric space is straightforward, 
see \eg{} \cite{EthierKurtz}. 

A further extension to functions with values in an arbitrary completely
regular topological space $E$ was made by \citet{Jakubowski},
generalizing a special case by \citet{Mitoma}.
Unfortunately there is a gap in the proof of the basic theorem 
\cite[Theorem 1.3]{Jakubowski} 
showing that the constructed topology depends only on the topology of $E$
(and not on the pseudometrics used in the construction, see \refS{S3} below).
It is easy to give a complete proof, but since we have not been able to find
a published proof, we give a detailed proof here (\refT{T1}).
In \refS{S2}, we correct also another error in \cite{Jakubowski}.
 
\begin{remark}
  We consider here functions defined on $\oi$. It is well-known that 
there is a version of the Skorohod topology for \cadlag{} functions on $\ooo$.
This too was extended by \cite{Jakubowski} to 
the space $D(\ooo,E)$ consisting of the \cadlag{} functions on $\ooo$ with
values in an 
arbitrary completely regular space $E$. 
Using the definition and methods of \cite[Section 4]{Jakubowski} together
with the proofs below, it is easy
to see that \refTs{T1} and \ref{T2} hold also for $D(\ooo,E)$.
\end{remark}

\begin{remark}
\citet[Theorem 1.3]{Jakubowski} is important also
in  the standard special case when $E$ is a metric space;
in this case it
shows that the Skorohod topology
does not depend on the choice of metric in $E$.
This important and useful fact seems to be largely ignored or at most
implicit in the literature rather than stated
explicitly. For example, it is a consequence of (and essentially equivalent to)
\cite[Problem 3.11.13]{EthierKurtz} or \cite[Exercise 16.5]{Kallenberg} 
(both given as exercises without proof, and
the latter stated only for complete separable metric spaces),
but also there it is not stated explicitly.
\end{remark}

\begin{ack}
  I thank Adam Jakubowski for helpful comments.
\end{ack}

\section{Preliminaries}\label{S2}
Let $E=(E,\tau)$ be a Hausdorff topological space.
Let $D(\oi,E)=D(\oi,E,\tau)$ 
be the space of all functions $x:\oi\to E$ that are 
\cadlag, i.e., 
right-continuous on $\oi$ and with left limits at all $t\in\ooi$.
(We usually omit $\tau$ from the notation.)
We denote the left limit at $t$ by
\begin{align}\label{ft-}
  f(t-):=\lim_{s\upto t} f(s),
\end{align}
for every function $f$ and $t\in\ooi$ such that this limit exists.
For completeness, we also define $f(0-):=f(0)$.

Let $\hI$ be the \emph{split interval} or \emph{arrow space}
defined by taking two points $t+$ and $t-$ for every $t\in\oi$ and letting
\begin{align}\label{hI}
  \hI:=\set{t-:t\in\ooi}\cup\set{t+:t\in\oi}.
\end{align}
\begin{remark}
  Note the asymmetry at the endpoints of our definition: $0-\notin\hI$. 
This is an unfortunate
consequence of the standard definition of $D(\oi)$ at the endpoints. For
other puposes one usually uses a symmetric version of $\hI$.
\end{remark}

We regard $\oi$ as a subset of $\hI$ by identifying $t+$ with $t$ for every
$t\in\oi$. 
We give $\hI$ the natural order, extending the standard order on $\oi$.
Formally, for $t,u\in\oi$,
\begin{align}
  t+<u+&\iff t<u,\\
  t-<u-&\iff t<u,\\
  t+<u-&\iff t<u,\\
  t-<u+&\iff t\le u.
\end{align}
We then give $\hI$ the order topology.
It is easily seen that a neighbourhood base at $t-$ is given by the
intervals (in $\hI$ with this order) $(s-,t-]$, $s<t$;
similarly
a neighbourhood base at $t+$ is given by the
intervals $[t+,u+)$, $u>t$, interpreted as $\set{1+}$ when $t=1$.
(Thus \set{1+} is isolated.)
It is well known, and easy to se, that $\hI$ is compact.

We regard $\oi$ as a subset of $\hI$ by identifying $t\in\oi$ with
$t+\in\hI$.
(Note that the subspace topology that this induces on $\oi$ regarded as a
subset of $\hI$ is \emph{not} the standard topology.)

\begin{remark}
  $\hI$ is totally disconnected, separable, and first countable, but not
  second countable and not metrizable, 
see e.g.\ \cite[Section 9.2]{SJ271}.
\end{remark}

If $f\in D(\oi,E)$, then $f$ has a natural extension to $\hI$ given
by \eqref{ft-}.

\begin{lemma}\label{LA}
  If\/ $E$ is a regular (T${}_3$) topological space, 
and $f:\oi\to E$ is a  function,
then the following are  equivalent:
\begin{romenumerate}
\item\label{LA1} 
$f$ is \cadlag{}. In other words, $f\in D(\oi,E)$.
\item\label{LA2} 
$f$ has a continuous extension to $\hI$.
\end{romenumerate}
If this holds, then the continuous extension to $\hI$ is unique, and is the
natural extension given by \eqref{ft-}.
\end{lemma}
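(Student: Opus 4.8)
The plan is to take the natural extension of $f$ to $\hI$ determined by \eqref{ft-}---namely $f(t+)=f(t)$ for $t\in\oi$ and $f(t-)=\lim_{s\upto t}f(s)$ for $t\in\ooi$---and to check continuity directly from the neighbourhood bases for $\hI$ recorded above. Two preliminary observations organize the argument. First, $\oi$, identified with $\set{t+:t\in\oi}$, is dense in $\hI$, since every basic neighbourhood $(s-,t-]$ of a point $t-$ contains the points $r+$ with $s\le r<t$; as $E$ is Hausdorff, this at once yields the uniqueness claim, because two continuous extensions of $f$ agree on the dense set $\oi$ and hence on all of $\hI$. Second, computing the basic neighbourhoods gives $[t+,u+)=\set{s+:t\le s<u}\cup\set{s-:t<s\le u}$ and $(s-,t-]=\set{r+:s\le r<t}\cup\set{r-:s<r\le t}$; the key feature is that each contains both ``plus'' points, carrying values $f(s)$, and ``minus'' points, carrying left limits $f(s-)$.

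I would prove \ref{LA2}$\Rightarrow$\ref{LA1} first, as it is the easier direction and simultaneously pins down the extension. Suppose $g\colon\hI\to E$ is a continuous extension of $f$, so $g(t+)=f(t)$. Fixing $t\in\oi$ and an open $V\ni f(t)$, continuity gives $u>t$ with $g\bigl([t+,u+)\bigr)\subseteq V$; since $r+\in[t+,u+)$ for $r\in[t,u)$ this says $f(r)\in V$ there, i.e.\ $f$ is right-continuous at $t$. Fixing instead $t\in\ooi$ and an open $V\ni g(t-)$, continuity gives $s<t$ with $g\bigl((s-,t-]\bigr)\subseteq V$, whence $f(r)\in V$ for $r\in(s,t)$; because $E$ is Hausdorff this means $\lim_{r\upto t}f(r)$ exists and equals $g(t-)$. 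Thus $f$ is \cadlag{} and $g(t-)=f(t-)$, so $g$ is the natural extension.

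The substantial direction is \ref{LA1}$\Rightarrow$\ref{LA2}, and this is where regularity is used. Take $f\in D(\oi,E)$ with its natural extension; continuity at the isolated point $1+$ is automatic, so fix $t+$ with $t<1$ and an open $V\ni f(t)$. Using regularity, choose an open $W$ with $f(t)\in W\subseteq\overline W\subseteq V$. Right-continuity provides $u>t$ with $f(s)\in W$ for all $s\in[t,u)$, which handles the plus points of $[t+,u+)$; for a minus point $s-$ with $s\in(t,u]$ the left limit $f(s-)$ is a limit of values $f(r)\in W$ (taking $r\in[t,s)$), so $f(s-)\in\overline W\subseteq V$ since $\overline W$ is closed. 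Hence $f\bigl([t+,u+)\bigr)\subseteq V$. The argument at $t-$, $t\in\ooi$, is symmetric: given an open $V\ni f(t-)$ pick $W$ with $f(t-)\in W\subseteq\overline W\subseteq V$, use the definition of the left limit to find $s<t$ with $f(r)\in W$ for all $r\in[s,t)$ (shrinking $s$ if necessary), and observe that on $(s-,t-]$ the values $f(r)$ lie in $W$ while the left limits $f(r-)$, $s<r\le t$, lie in $\overline W\subseteq V$ by the same closure argument.

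The one genuinely delicate point---and the expected main obstacle---is precisely the presence of the minus points in every basic neighbourhood. The plus points are controlled directly by right-continuity or by the defining property of the left limit, but the auxiliary left limits $f(s-)$ are only reached as limits of values of $f$, so keeping them inside the target open set $V$ forces us to interpose a set $W$ with $\overline W\subseteq V$. This is exactly the content of regularity, and it is the reason the hypothesis that $E$ be regular cannot be dropped.
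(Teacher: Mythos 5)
Your proposal is correct and follows essentially the same route as the paper: uniqueness from density of $\oi$ in $\hI$, the easy direction read off from the basic neighbourhoods, and regularity invoked (via an open $W$ with $\overline W\subseteq V$, equivalent to the paper's closed neighbourhood inside $U$) precisely to keep the left-limit points $f(s-)$ inside the target set. The only cosmetic difference is that the paper argues the implication \ref{LA2}$\implies$\ref{LA1} with sequences rather than directly with neighbourhoods.
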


\begin{proof}
\ref{LA2}$\implies$\ref{LA1}:
Suppose that $f$ has a continuous extension (also denoted by $f$) to $\hI$.

If $t\in\oio$ and $s_n\downto t$ in $\oi$ (with the usual topology), then
$s_n\to t$ also in $\hI$, and thus $f(s_n)\to f(t)$.

Similarly, 
if $t\in\ooi$ and $s_n\upto t$ in $\oi$, then
$s_n\to t-$ in $\hI$, and thus $f(s_n)\to f(t-)$.

Hence, regarded as a function on $\oi$, $f$ is \cadlag.

\ref{LA1}$\implies$\ref{LA2}:
We define an extension to $\hI$ by \eqref{ft-}. 
We claim that this extension is continuous on $\hI$.

First, let $t\in\oio$, and let $U$ be a neighbourhood of $f(t)$.
Since $E$ is regular, there exists a closed neighbourhood $V$ of $f(t)$ with
$V\subseteq U$.
Since $f$ is right-continuous by assumption, there exists $\eps>0$ such that
if $u\in[t,t+\eps]$, then $f(u+)=f(u)\in V$. Furthermore, $f$ has a left
limit $f(u-)$ at every such $u$, and since $V$ is closed, it follows that if
$u\in(t,t+\eps]$, then $f(u-)\in V$.
Hence, if $v\in\hI$ with $t\le v<t+\eps$, then $f(v)\in V\subseteq U$.
The neighbourhood $U$ was arbitrary, and thus $f$ is continuous at every
$t+\in\hI$.
(Recall that $[t,t+\eps)=(t-,t+\eps)$ is open in $\hI$.)

Similarly, $f$ is continuous at every $t-\in\hI$. Thus the extension $f$ is
continuous on $\hI$.

The continuous extension is unique, since $\oi$ is dense in $\hI$.
\end{proof}

\begin{remark}
  When $E=\bbR$ or $\bbC$, 
this extension gives an isomorphism $D\oi \cong C(\hI)$
(with these denoting spaces of real-valued or complex-valued functions,
respectively). 
$D\oi$, equipped with the supremum norm, is a Banach algebra, and 
its maximal ideal space can be identified with $\hI$; then
this isomorphism $D\oi \to C(\hI)$ is the Gelfand transform,
see e.g.\ \cite[Section 9.2]{SJ271}.
\end{remark}

\begin{corollary}\label{CA}
If\/ $E$ is a regular topological space and $f\in D(\oi,E)$, then
the set $\set{f(t):t\in\oi}\cup\set{f(t-):t\in\ooi}$  is a compact subset of
$E$.
In particular, the range $f(\oi)$ is relatively compact.
\end{corollary}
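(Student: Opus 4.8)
The plan is to recognize the displayed set as the continuous image of the compact space $\hI$, so that the result reduces immediately to \refL{LA} together with the compactness of $\hI$. The proof should be essentially a one-line application of ``continuous image of compact is compact,'' and almost all of the work is to match the set in the statement with $f(\hI)$.

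First I would invoke \refL{LA}: since $f\in D(\oi,E)$ and $E$ is regular, $f$ extends to a continuous function $\hI\to E$, still denoted $f$, which by the uniqueness clause is the natural extension, so that $f(t+)=f(t)$ for $t\in\oi$ and $f(t-)$ is the left limit \eqref{ft-} for $t\in\ooi$. Recalling the definition \eqref{hI} of $\hI$ and the identification of $t+$ with $t$, the image $f(\hI)$ is then precisely $\set{f(t):t\in\oi}\cup\set{f(t-):t\in\ooi}$, i.e., the set in the statement. Since $\hI$ is compact (as noted in \refS{S2}) and $f$ is continuous on $\hI$, the image $f(\hI)$ is a compact subset of $E$, which is the first assertion.

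For the ``in particular'' clause I would note that $f(\oi)\subseteq f(\hI)$; since $E$ is Hausdorff, the compact set $f(\hI)$ is closed, so it contains $\overline{f(\oi)}$, and a closed subset of a compact set is compact. Hence $\overline{f(\oi)}$ is compact, i.e., $f(\oi)$ is relatively compact. I do not expect any genuine obstacle here: the entire content is carried by \refL{LA} and the compactness of $\hI$, and the only point needing a moment's care is verifying that the left-limit points are indexed exactly by $t\in\ooi$, in agreement with \eqref{hI} and the endpoint convention $f(0-):=f(0)$.
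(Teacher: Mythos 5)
Your proof is correct and is essentially identical to the paper's: the paper likewise notes that $\hI$ is compact, that the extension $f:\hI\to E$ is continuous by \refL{LA}, and concludes that $f(\hI)$ is compact. Your extra care in identifying $f(\hI)$ with the displayed set and in justifying the ``in particular'' clause (compact subsets of a Hausdorff space are closed) only spells out steps the paper leaves implicit.
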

\begin{proof}
  $\hI$ is a compact space and the extension 
$f:\hI\to E$ is continuous by \refL{LA}.
Hence $f(\hI)$ is compact.
\end{proof}

\refC{CA} is \cite[Proposition 1]{Jakubowski}, but assumes that $E$ is regular.
It is erroneously claimed in \cite[Proposition 1]{Jakubowski} 
that it holds for Hausdorff spaces. 
The following example shows that this is incorrect;
\refL{LA} and \refC{CA} do not hold for arbitrary Hausdorff spaces $E$.
(Only the completely regular case, which is correct, 
is used later in \cite{Jakubowski}.)

Let $\tau_0$ be the standard topology on $\bbR$.
\begin{example}
  Let $\tau_K$ (often called the $K$-topology or the Smirnov topology)
be the topology on $\bbR$ defined by letting $K:=\set{\frac1n:n\in\bbN}$
and declaring a set $O\subseteq\bbR$ to be open if $O=U\setminus H$ where
$U$ is open in the usual topology $\tau_0$
and $\emptyset\subseteq H\subseteq K$.
Then the subspace topology on $\bbR\setminus\set0$ equals the standard
topology, but the topology at $0$ is different: a neigbourhood base
at 0 is given by the sets $(-\eps,\eps)\setminus K$.
The topology $\tau_K$ is Hausdorff (since it is finer than the standard
topology), but it is not regular since $K$ is a closed set and $0\notin K$,
but $0$ and $K$ cannot be separated by two open sets.
(See \eg{} 
\cite[Counterexample 64]{SteenSeebach}
or
\cite[Example 1.5.6, with a trivial  modification]{Engelking}.)

Note, for later use, that $K$ is closed but discrete and infinite,
and therefore not compact.

We let $\bbR_K=(R,\tau_K)$ denote $\bbR$ with this topology.

Define a function $f:\oi\to \bbR_K$ by
\begin{align}
  \begin{cases}
      f(0):=f(1):=0,
\\
f(t):=\frac12(t+\frac1n), & t\in[\tfrac1{n+1},\tfrac1n),\; n\in\bbN.
  \end{cases}
\end{align}
Note that $f(t)\notin K$ for all $t\in\oi$.

Evidently $f\in D(\oi,\bbR,\tau_0)$ (with the standard topology on $\bbR$), with
\begin{align}\label{b1}
  \begin{cases}
  f(x-)=f(x), & x\notin K,
\\
f(\frac1n-)=\tfrac1n,& n\in\bbN.  
  \end{cases}
  \end{align}
Thus $f(t)\in\bbR\setminus\set0$ for every $t\in(0,1)$ and 
$f(t-)\in\bbR\setminus\set0$ for every $t\in(0,1]$; since $\tau_K$ equals
the standard topology on $\bbR\setminus\set0$, 
it is easily seen that $f$ is
right-continuous and has left limits $f(t-)$
also in $\tau_K$ everywhere on $\ooi$.
Furthermore, since $f(t)\notin K$ for all $t$, it follows that $f$ is
(right-)continuous at $0$ too in $\tau_K$. 
Hence $f$ is \cadlag{} also for $\tau_K$, and thus $f\in D(\oi,\bbR_K)$.

This means that $f$ can be extended to $\hI$ using \eqref{b1}, also for
$\tau_K$. (This extension is necessarily the same as for the standard
topology on $\bbR).$
However, this extension of $f$ to $\hI$  is \emph{not} continuous,
since
$\frac1n-\to0$ in $\hI$ as \ntoo, 
but $f(\frac1n-)=\frac1n\not\to0$ in $\bbR_K$:
by construction, $U:=\bbR\setminus K$ is a neighbourhood of $0$ such that
$f(\frac1n-)\notin U$ for every $n$.
Hence, \refL{LA} does not hold for $\bbR_K$.

Similarly, \refC{CA} does not hold for $\bbR_K$ and the function $f\in
D(\oi,\bbR_K)$ above. In fact, the set 
$\set{f(t):t\in\oi}\cup\set{f(t-):t\in\ooi}= f(\hI)$ is not compact (and not
even relatively compact), since it contains
$\set{f(\frac1n-):n\in\bbN}=K$ as a closed but non-compact subset.
Moreover, $f(\hI)\subseteq \overline{f(\oi)}$ (for any \cadlag{} $f$), and
thus $\overline{f(\oi)}$ is not compact, i.e., $f(\oi)$ is not relatively
compact. 
\end{example}

\section{The Skorohod topology on $D(\oi,E)$}\label{S3}

Assume from now on that $E=(E,\tau)$ is a completely regular space.

The Skorohod topology on $D(\oi,E)$ is defined by \cite{Jakubowski} 
(generalizing \cite{Mitoma})
as follows.
We use the fact that any completely regular topology
is generated by a family of pseudometrics $\set{d_i}_{i\in I}$ satisfying
\begin{gather}
  \label{ps1}
\forall_{a,b\in E}\exists_{i\in I} \quad d_i(a,b)>0,
\\
\forall_{i,j\in I}\exists_{k\in I} \quad\max(d_i,d_j)\le d_k.\label{ps2}
\end{gather}
More precisely, the topology on $E$ is generated by the functions
$d_i(a,\cdot):E\to\bbR$ with $i\in I$ and $a\in E$;
equivalently, the set of open balls for the pseudometrics $d_i$ forms a
base of the topology.
Furthermore, every pseudometric $d_i$ is continuous $E\times E\to\bbR$.

\begin{remark}
  Conversely, any such family of pseudometrics on a set defines a completely
regular topology. Note also that \eqref{ps2} is mainly for convenience, and
can be assumed without loss of generality, since for any family of
pseudometrics \set{d_i} satisfying \eqref{ps1}, we may add all finite maxima
$\max(d_{i_1},\dots,d_{i_m})$
to the family; then \eqref{ps2} holds, and the
enlarged family defines the same topology as the original family.
(For the original family, the set of balls is a subbase for the topology.)
\end{remark}

To define the topology on $D(\oi,E)$, we choose such a family
$\set{d_i}_{i\in I}$.
For any pseudometric $d$ on $E$, we define 
a corresponding pseudometric
$\td$ on $D(\oi,E)$ by
\begin{align}\label{td}
\td(x,y):=\inf_{\gl\in\gL}\max\Bigcpar{\sup_{t\in\oi}|\gl(t)-t|,\sup_{t\in\oi}
  d\bigpar{x(\gl(t)),y(t)}}  ,
\end{align}
where $\gL$ is the set of strictly increasing continuous functions $\gl$
mapping $\oi$ onto itself.
Finally, $D(\oi,E)$ is given the topology generated by the family of
pseudometrics $\set{\td_i}_{i\in I}$.

\begin{lemma}\label{LK}
Let the topology  on $E$ be defined by a family of pseudometrics
$\set{d_i}_{i\in I}$ satisfying \eqref{ps1}--\eqref{ps2}.
Let $\rho$ be any continuous pseudometric on $E$,
and suppose that $K$ is a compact subset of $E$. 
Then, for every $\eps>0$, there exists a pseudometric $d_i$ in the given
family and $\gd>0$, such that if $x\in K$ and $y\in E$ with $d_i(x,y)<\gd$,
then $\rho(x,y)<\eps$.
\end{lemma}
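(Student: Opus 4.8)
The plan is to prove what is essentially uniform continuity of $\rho$ on the compact set $K$ relative to the generating family: I would localize using the continuity of $\rho$, pass to a finite subcover of $K$, and then amalgamate the finitely many indices that arise into a single index via \eqref{ps2}.

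First I would fix $\eps>0$ and work pointwise on $K$. For each $x\in K$, the map $\rho(x,\cdot):E\to\bbR$ is continuous (since $\rho$ is a continuous pseudometric), so $\set{y:\rho(x,y)<\eps/2}$ is an open neighbourhood of $x$. Because the open balls of the pseudometrics $d_i$ form a base of the topology of $E$, there exist an index $i(x)\in I$ and a radius $r(x)>0$ with
\[
  d_{i(x)}(x,y)<r(x) \implies \rho(x,y)<\eps/2 .
\]
The half-radius balls $\set{y:d_{i(x)}(x,y)<r(x)/2}$, for $x\in K$, form an open cover of $K$, so by compactness there are finitely many points $x_1,\dots,x_n\in K$ whose associated half-radius balls already cover $K$. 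I would abbreviate $i_m:=i(x_m)$ and $r_m:=r(x_m)$.

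Next I would collapse these finitely many indices into one. Iterating \eqref{ps2} yields a single index $k\in I$ with $\max(d_{i_1},\dots,d_{i_n})\le d_k$, so $d_{i_m}\le d_k$ for every $m$. I would then set $\gd:=\tfrac{1}{2}\min_{1\le m\le n} r_m>0$ and claim that this $d_k$ and this $\gd$ work.

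The crux — and the step to carry out carefully — is the verification by two triangle inequalities. Given $x\in K$ and $y\in E$ with $d_k(x,y)<\gd$, I would choose $m$ with $d_{i_m}(x_m,x)<r_m/2$, which is possible because the half-radius balls cover $K$. Taking $y=x$ in the defining implication already gives $\rho(x_m,x)<\eps/2$. For the other term, the pseudometric triangle inequality gives $d_{i_m}(x_m,y)\le d_{i_m}(x_m,x)+d_{i_m}(x,y)<r_m/2+d_k(x,y)<r_m/2+\gd\le r_m$, using $d_{i_m}\le d_k$ and $\gd\le r_m/2$; hence $\rho(x_m,y)<\eps/2$. The triangle inequality for $\rho$ then yields $\rho(x,y)\le\rho(x,x_m)+\rho(x_m,y)<\eps$, as required. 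The only delicate point is precisely the need for $d_k$ to dominate each $d_{i_m}$, so that smallness of $d_k(x,y)$ transfers to smallness of $d_{i_m}(x,y)$; this is exactly where \eqref{ps2} is essential, and it is also the reason I take the cover by half-radius (rather than full-radius) balls, leaving room for the triangle estimate to close.
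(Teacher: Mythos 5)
Your proof is correct and follows essentially the same route as the paper's: localize via continuity of $\rho$ and the base of $d_i$-balls, pass to a finite subcover of $K$ by half-radius balls, amalgamate the finitely many indices using \eqref{ps2}, and close with two triangle inequalities. The only differences are cosmetic (you use radii $r_m$ and $r_m/2$ where the paper uses $2\gd_z$ and $\gd_z$).
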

\begin{proof}
For every $z\in K$, the set $\set{y\in E:\rho(y,z)<\eps/2}$ is an open
neighbourhood of $z$, and thus there exists $j_z\in I$ and $\gd_z>0$ such that
\begin{align}\label{c1}
U_z:=  \set{y:d_{j_z}(y,z)<2\gd_z}
\subseteq\set{y\in E:\rho(y,z)<\eps/2}.
\end{align}

The open sets $U'_z:=  \set{y:d_{j_z}(y,z)<\gd_z}$
cover $K$, so we may select an finite subcover
$U'_{z_1},\dots,U'_{z_n}$. It follows from \eqref{ps2} that there exists 
$i\in I$ such that $d_{j_{z_k}}\le d_i$ for every $k=1,\dots,n$.

Let $\gd:=\min_{1\le k\le n} \gd_{z_{j_k}}$. If $x\in K$ and $y\in E$ with
$d_i(x,y)<\gd$, then choose $z_k$ such that $x\in U'_{z_k}$. We have
\begin{align}\label{c2}
d_{j_{z_k}}(y,z) \le  d_{j_{z_k}}(x,y)  + d_{j_{z_k}}(x,z) 
\le d_i(x,y) + \gd_{z_k} <\gd + \gd_{z_k} \le 2\gd_{z_k};
\end{align}
thus $y\in U_{z_k}$. Furthermore, $x\in U'_{z_k}\subseteq U_{z_k}$.
Consequently, \eqref{c1} shows that $\rho(x,z_k)<\eps/2$ and
$\rho(y,z_k)<\eps/2$, and thus $\rho(x,y)<\eps$.
\end{proof}

\begin{theorem}[Jakubowski \cite{Jakubowski}]\label{T1}
  Let $\set{d_i}_{i\in I}$ and $\set{\zeta_j}_{j\in J}$ be two families of
  pseudometrics on $E$ satisfying \eqref{ps1} and \eqref{ps2}.
Let the topology $\tau$ generated by $\set{d_i}_{i\in I}$ be coarser than
the topology $\gs$ generated by  $\set{\zeta_j}_{j\in J}$.
Then obviously $D(\oi,E,\tau)\supseteq D(\oi,E,\gs)$
and the topology on $D(\oi,E,\gs)$ generated by the pseudometrics
$\set{\tzeta_j}_{j\in J}$ is finer than the topology induced by
$D(\oi,E,\tau)$ generated by the pseudometrics $\set{\td_i}_{i\in I}$.

In particular, if the families $\set{d_i}_{i\in I}$ and $\set{\zeta_j}_{j\in J}$
define the same topology on $E$, then the 
families $\set{\td_i}_{i\in I}$ and $\set{\tzeta_j}_{j\in J}$
define the same topology on $D(\oi,E)$.
\end{theorem}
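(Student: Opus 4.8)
The plan is to reduce both assertions to the single statement that the identity map from $D(\oi,E,\gs)$ equipped with the $\set{\tzeta_j}$-topology to the same set equipped with the $\set{\td_i}$-topology is continuous; the final ``in particular'' claim then follows by applying the theorem twice, once in each direction, in the case $\tau=\gs$ (then $\tau$ is coarser than $\gs$ and $\gs$ is coarser than $\tau$, and $D(\oi,E,\tau)=D(\oi,E,\gs)$ as sets, so the two topologies on this common space are mutually finer, hence equal). The inclusion $D(\oi,E,\gs)\subseteq D(\oi,E,\tau)$ is immediate: if $x$ is \cadlag{} for the finer topology $\gs$, then each limit of $x(s)$ computed in $\gs$ remains a limit in the coarser $\tau$, so the right-continuity and the left limits persist in $\tau$, whence $x\in D(\oi,E,\tau)$.

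For the topology comparison I fix $x\in D(\oi,E,\gs)$, an index $i\in I$, and $\eps>0$; it suffices to produce a \emph{single} index $j\in J$ and $\gd>0$ such that $\tzeta_j(x,y)<\gd$ forces $\td_i(x,y)<\eps$. Indeed, the corresponding $\tzeta_j$-ball around $x$ then lies inside the $\td_i$-ball, and since $\zeta_{j}\le\zeta_{j'}$ implies $\tzeta_{j}\le\tzeta_{j'}$, property \eqref{ps2} for $\set{\zeta_j}$ lets us pass from finite intersections of $\tzeta$-balls to single $\tzeta$-balls, so the $\set{\tzeta_j}$-neighbourhoods of $x$ refine the $\set{\td_i}$-neighbourhoods. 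The key structural input is \refC{CA}: since $(E,\gs)$ is completely regular, hence regular, the set $K:=\set{x(t):t\in\oi}\cup\set{x(t-):t\in\ooi}=x(\hI)$ is a compact subset of $(E,\gs)$. Moreover $d_i$ is $\tau$-continuous, hence $\gs$-continuous because $\gs$ is finer, so $d_i$ is a continuous pseudometric on $(E,\gs)$.

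Now I apply \refL{LK} to the family $\set{\zeta_j}$, the compact set $K$, the continuous pseudometric $\rho:=d_i$, and the value $\eps/2$: this yields $j\in J$ and $\gd'>0$ such that $z\in K$ and $\zeta_j(z,w)<\gd'$ imply $d_i(z,w)<\eps/2$. Set $\gd:=\min(\gd',\eps/2)$. If $\tzeta_j(x,y)<\gd$, then by the definition \eqref{td} there is some $\gl\in\gL$ with $\sup_t|\gl(t)-t|<\gd$ and $\sup_t\zeta_j\bigpar{x(\gl(t)),y(t)}<\gd\le\gd'$. For each $t$ the point $x(\gl(t))$ lies in $x(\oi)\subseteq K$, so \refL{LK} gives $d_i\bigpar{x(\gl(t)),y(t)}<\eps/2$; taking the supremum over $t$ and using this same $\gl$ in \eqref{td} yields $\td_i(x,y)\le\max\bigpar{\gd,\eps/2}=\eps/2<\eps$, as required.

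The heart of the argument, and the place where the hypothesis of (complete) regularity is indispensable, is the passage through the compact set $K$ via \refC{CA} and \refL{LK}. The comparison term in \eqref{td} always has its first argument $x(\gl(t))$ confined to the compact range $K$, which is precisely what upgrades the merely local domination of $d_i$ by $\zeta_j$ near points of $K$ into a \emph{uniform} bound valid for all $t$ simultaneously. I expect this uniformity to be the main obstacle: without the compactness supplied by \refC{CA} there would be no reason for one $\zeta_j$ and one $\gd$ to control $d_i$ across the whole range of $x$, and this is exactly the point at which the original proof is incomplete.
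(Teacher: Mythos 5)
Your proof is correct and follows essentially the same route as the paper's: you deduce the set inclusion directly, use \refC{CA} to place the range of $x$ inside a $\gs$-compact set $K$, apply \refL{LK} with $\rho=d_i$ to get a single $\zeta_j$ and $\gd$ controlling $d_i$ uniformly on $K$, and then transfer this through the definition \eqref{td} to bound $\td_i(x,y)$ by a $\tzeta_j$-ball condition. The only differences are cosmetic (an $\eps/2$ instead of $\eps$, and spelling out the neighbourhood-base and ``in particular'' steps that the paper leaves implicit).
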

\begin{proof}
Suppose that $x\in D(\oi,E,\gs)$. Then obviously $x\in D(\oi,E,\tau)$.

Let $i\in I$. 
Then the pseudometric $d_i$ is continuous on $(E,\tau)$, 
and thus also on $(E,\gs)$.
By \refC{CA}, the range $x(\oi)$ is relatively compact in $(E,\gs)$, and
is thus a subset of some compact set $K$ in $(E,\gs)$.

Let $\eps>0$.
\refL{LK} (applied to $(E,\gs)$, $\set{\zeta_j}_{j\in J}$, and $d_i$) 
shows that there exists $j\in J$
and $\gd>0$ such that if 
$y\in D(\oi,E,\gs)$,
$t\in\oi$, $\gl\in\gL$, 
and $\zeta_j(x(\gl(t)),y(t))<\gd$, then $d_i(x(\gl(t)),y(t))<\eps$.
Consequently, recalling \eqref{td},
if $\tzeta_j(x,y)<\min(\gd,\eps)$, then $\td_i(x,y)\le\eps$.
It follows that $\td_i$ is continuous 
on $D(\oi,E,\gs)$ for the topology defined by
$\set{\tzeta_j}_{j\in J}$, and the result follows.
\end{proof}

This theorem shows that we can unambiguously talk about $D(\oi,E)$ as a
topological space, for any completely regular topological space $E$.  

The theorem 
has the following corollary.

\begin{theorem}\label{T2}
Suppose that $E$ and $F$ are two completely regular spaces, and that
$\psi:E\to F$ is a continuous function.
Define $\Psi: D(\oi,E)\to D(\oi,F) $ by 
\begin{align}\label{Psi}
\Psi(x):=\Psi\circ x,
\qquad \text{i.e.,} \qquad 
\Psi(x)(t):=\Psi\xpar{x(t)}, \quad t\in\oi.
\end{align}
Then $\Psi$ is continuous $D(\oi,E)\to D(\oi,F)$.
\end{theorem}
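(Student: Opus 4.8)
The plan is to check first that $\Psi$ is well defined, and then to reduce the continuity of $\Psi$ to the very mechanism already used in \refT{T1}, via a \enquote{pullback} identity for the pseudometrics~\eqref{td}.

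\emph{Well-definedness.} First I would verify that $\Psi$ really maps $D(\oi,E)$ into $D(\oi,F)$. Let $x\in D(\oi,E)$. By \refL{LA}, $x$ has a continuous extension to the compact space $\hI$. Since $\psi:E\to F$ is continuous, the composition $\psi\circ x:\hI\to F$ is continuous, and hence, again by \refL{LA}, its restriction to $\oi$ is \cadlag; that is, $\Psi(x)=\psi\circ x\in D(\oi,F)$. (In passing, this also identifies the left limits as $\Psi(x)(t-)=\psi\bigpar{x(t-)}$.)

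\emph{A pullback identity.} The topology on $D(\oi,F)$ is generated by the pseudometrics $\tilde e$ for $e$ ranging over a family generating the topology of $F$, so it suffices to treat one such continuous pseudometric $e$ on $F$. Define $\rho:E\times E\to\bbR$ by $\rho(a,b):=e\bigpar{\psi(a),\psi(b)}$. Then $\rho$ is a pseudometric on $E$, and it is continuous on $(E,\tau)$ because $\psi$ and $e$ are continuous. Comparing with \eqref{td} and using $\Psi(x)(s)=\psi\bigpar{x(s)}$, we have $e\bigpar{\Psi(x)(\gl(t)),\Psi(y)(t)}=\rho\bigpar{x(\gl(t)),y(t)}$ for every $\gl\in\gL$ and $t\in\oi$, while the term $\sup_t|\gl(t)-t|$ is unchanged; taking the infimum over $\gl\in\gL$ yields the exact identity
\[
\tilde e\bigpar{\Psi(x),\Psi(y)}=\tilde\rho(x,y),\qquad x,y\in D(\oi,E).
\]
Thus the continuity of $\Psi$ follows as soon as I know that each such $\tilde\rho$ is continuous on $D(\oi,E)$ for the topology generated by $\set{\td_i}_{i\in I}$.

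\emph{Continuity of $\tilde\rho$.} This is exactly the argument in the proof of \refT{T1}, applied with the continuous pseudometric $\rho$ in the role of $d_i$. Concretely, fix $x\in D(\oi,E)$ and $\eps>0$. By \refC{CA} the range $x(\oi)$ is relatively compact, so $x(\oi)\subseteq K$ for some compact $K\subseteq E$, and \refL{LK} (applied to $\set{d_i}_{i\in I}$, the pseudometric $\rho$, and $K$) yields $i\in I$ and $\gd>0$ such that $a\in K$, $b\in E$ and $d_i(a,b)<\gd$ imply $\rho(a,b)<\eps$. If now $\td_i(x,y)<\min(\gd,\eps)$, then by \eqref{td} there is $\gl\in\gL$ with $\sup_t|\gl(t)-t|<\eps$ and $\sup_t d_i\bigpar{x(\gl(t)),y(t)}<\gd$; since $x(\gl(t))\in x(\oi)\subseteq K$ for every $t$, it follows that $\sup_t\rho\bigpar{x(\gl(t)),y(t)}\le\eps$ and hence $\tilde\rho(x,y)\le\eps$. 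Combined with the identity above, $\td_i(x,y)<\min(\gd,\eps)$ implies $\tilde e\bigpar{\Psi(x),\Psi(y)}\le\eps$, which is the required continuity of $\Psi$. (Alternatively, one may add $\rho$, together with the finite maxima required by \eqref{ps2}, to the family $\set{d_i}_{i\in I}$; this does not change $\tau$ since $\rho$ is continuous, and then \refT{T1} applies directly to show $\tilde\rho$ is continuous for $\set{\td_i}_{i\in I}$.)

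\emph{Main obstacle.} The only genuine subtlety is that $\psi$ need not be uniformly continuous in any sense (indeed $E$ need not be metrizable), so $\tilde e\bigpar{\Psi(x),\Psi(y)}$ cannot be bounded by $\td_i(x,y)$ globally; the role of \refC{CA} is precisely to localize the estimate to the compact range of the fixed function $x$, exactly as in \refT{T1}.
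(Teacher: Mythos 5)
Your proof is correct and follows essentially the same route as the paper: pull back each generating pseudometric $e$ on $F$ to the continuous pseudometric $\rho(a,b)=e(\psi(a),\psi(b))$ on $E$, observe the exact identity $\tilde e(\Psi(x),\Psi(y))=\tilde\rho(x,y)$, and deduce continuity of $\tilde\rho$ from \refC{CA} and \refL{LK} (the paper packages this last step by enlarging the family and citing \refT{T1}, which is precisely your parenthetical alternative). The only differences are cosmetic: you spell out the well-definedness of $\Psi$ via \refL{LA}, where the paper calls it clear.
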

\begin{proof}
  It is clear that $\Psi$ maps $D(\oi,E)$ into $D(\oi,F)$.

Let the topologies of $E$ and $F$ be defined by families of pseudometrics
$\set{d_i}_{i\in I}$ and $\set{\zeta_j}_{j\in J}$, respectively.
For $j\in J$, define $\xzeta_j(a,b):=\zeta_j(\psi(a),\psi(b))$ for $a,b\in E$;
then $\xzeta_j$ is a continuous pseudometric on $E$.
Hence, for every $i\in I$ and $j\in J$, $\max(d_i,\xzeta_j)$ is a continuous
pseudometric on $E$. It follows that 
$\set{d_i}_{i\in I}\cup\set{\xzeta_j}_{j\in J}\cup
\set{\max(d_i,\xzeta_j)}_{(i,j)\in I\times J}$ is a family of pseudometrics
on $E$ that satisfies \eqref{ps1}--\eqref{ps2}, and that this family defines
the same topology on $E$ as $\set{d_i}_{i\in I}$.

By \refT{T1}, the corresponding pseudometrics on $D(\oi,E)$ generate the
topology on $D(\oi,E)$. In particular, every $\txzeta_j$ is a continuous
semimetric on $D(\oi,E)$. It follows from the definitions \eqref{td}
and \eqref{Psi} that for any $x,y\in D(\oi,E)$,
\begin{align}
  \tzeta_j\bigpar{\Psi(x),\Psi(y)}
=\txzeta_j(x,y).
\end{align}
We have shown that this is a continuous function of $(x,y)\in D(\oi,E)^2$.
Since the pseudometrics $\tzeta_j$ generate the topology on $D(\oi,F)$, it
follows that $\Psi$ is continuous.
\end{proof}



\newcommand\AMS{Amer. Math. Soc.}
\newcommand\Springer{Springer-Verlag}
\newcommand\Wiley{Wiley}

\newcommand\vol{\textbf}
\newcommand\jour{\emph}
\newcommand\book{\emph}
\newcommand\inbook{\emph}
\def\no#1#2,{\unskip#2, no. #1,} 
\newcommand\toappear{\unskip, to appear}

\newcommand\arxiv[1]{\texttt{arXiv}:#1}
\newcommand\arXiv{\arxiv}

\newcommand\xand{and }
\renewcommand\xand{\& }

\def\nobibitem#1\par{}

\end{document}